\newtheorem{theo}{Theorem}[section]
\newtheorem{lemma}[theo]{Lemma}
\newtheorem{rem}[theo]{Remark}
\newtheorem{pb}[theo]{Problem}
\newtheorem{exam}[theo]{Example}
\newcommand\Ind{\operatorname{Ind}}
\newcommand\Inj{\operatorname{Inj}}
\newcommand\op{\operatorname{op}}
\newcommand\Set{\operatorname{\bf Set}}
\newcommand\Lin{\operatorname{\bf Lin}}
\newcommand\FinLin{\operatorname{\bf FinLin}}
\newcommand\colim{\operatorname{colim}}
\newcommand\ca{\mathcal {A}}
\newcommand\cc{\mathcal {C}}
\newcommand\cd{\mathcal {D}}
\newcommand\ck{\mathcal {K}}
\newcommand\cl{\mathcal {L}}
\newcommand\cw{\mathcal {W}}
\date{July 17, 2020}
\begin{document}
\title[Minimal accessible categories]
{Minimal accessible categories}
\author[J. Rosick\'{y}]
{J. Rosick\'{y}}
\thanks{Supported by the Grant Agency of the Czech Republic under the grant 
               19-00902S} 
\address{ 
\newline J. Rosick\'{y}\newline
Department of Mathematics and Statistics\newline
Masaryk University, Faculty of Sciences\newline
Kotl\'{a}\v{r}sk\'{a} 2, 611 37 Brno, Czech Republic\newline
rosicky@math.muni.cz
}
 
\begin{abstract}
We give a purely category-theoretic proof of the result of Makkai and Par\'e saying that the category $\Lin$ of linearly ordered
sets and order preserving injective mappings is a minimal finitely accessible category. We also discuss the existence of a minimal $\aleph_1$-accessible category.
\end{abstract} 
\keywords{}
\subjclass{}

\maketitle

\section{Introduction}
One of striking results of \cite{MP} is that the category $\Lin$ of linearly ordered sets and order preserving injective mappings
is a minimal finitely accessible category. This means that for every large finitely accessible category $\ck$ there is a faithful functor $\Lin\to\ck$ preserving directed colimits. \cite{MP} does not contain a proof of this result -- Makkai and Par\'e just
say that it essentially follows from the work of Morley \cite{M}. Since there are many applications of this result (see, e.g., \cite{LR}), it might be useful to give an explicit proof of it. We do it by transferring the standard model-theoretic argument to the language of accessible categories. Another, more model-theoretic proof, of the theorem of Makkai and Par\'e was recently given by Boney \cite{Bo}.

The minimality of $\Lin$ among finitely accessible categories implies its minimality among ($\infty,\omega$)-elementary categories (see \cite{MP} 3.4.1) and, even, among accessible categories
with directed colimits whose morphisms are monomorphisms (\cite{LR} 2.5).
One cannot expect that $\Lin$ is a minimal accessible category because there is no faithful functor from $\Lin$ to the 
$\aleph_1$-accessible category of well ordered sets and order preserving injective mappings. The reason is that any well ordered set $A$ is iso-rigid, it means that every isomorphism $A\to A$ is the identity. Using \cite{HPT}, we give an example 
of a $\aleph_1$-accessible category $\ck$ having every object $K$ rigid, i.e., every morphism $K\to K$ is the identity.
This yields a candidate for a minimal $\aleph_1$-accessible category. Similarly, one gets a candidate for a minimal 
$\aleph_\alpha$-accesible category.

\vskip 1mm
\noindent
{\bf Acknowledgement.}  We are grateful to T. Beke for useful discussions concerning this paper.

\section{Skolem cover}
Let $\ck$ be a finitely accessible category and $\ca$ its representative small full subcategory of finitely presentable objects
(i.e., any finitely presentable object of $\ck$ is  isomorphic to some $A\in \ca$). Let 
$$
E:\ck\to\Set^{\ca^{\op}}
$$ 
be the canonical embedding that takes each $K\in\ck$ to the contravariant functor $\ck(-,K):\ca\to\Set$. We note that, by Proposition 2.8 in \cite{AR},
this functor is fully faithful and preserves directed colimits and finitely presentable objects. Following Theorem 4.17 in \cite{AR}, $\ck$
is equivalent to a finitary-cone-injectivity class $\Inj(T)$ in $\Set^{\ca^{\op}}$; this means that there is a set $T$ of cones $a=(a_i:X\to EA_i)_{i\in I}$ 
where $X$ is finitely presentable in $\Set^{\ca^{\op}}$ and $A_i\in \ca$, $i\in I$ such that $\Inj(T)$ consists of functors $F$ injective to each cone 
$a\in T$. The latter means that for any morphism $f:X\to F$ there is $i\in I$ and $g:EA_i\to F$ with $ga_i=f$.
Let $S(\ck)$ be the category whose objects are $(F,a_F)_{a\in T}$ consisting of $F:\ca^{\op}\to\Set$ with $a_F$ assigning to a cone $a$ and
$f:X\to F$ a morphism $a_F(f):EA_i\to F$ for some $i\in I$ such that $a_F(f)a_i=f$. Morphisms $(F,a_F)\to (F',a_{F'})$ are
natural transformations $\varphi:F\to F'$ such that $a_{F'}(\varphi f)=\varphi a_F(f)$. The forgetful functor $G:S(\ck)\to\Set^{\ca^{\op}}$
is faithful and has values in $\Inj(T)$. Its codomain restriction $S(\ck)\to\Inj(T)$ is surjective on objects. Since $E:\ck\to\Inj(T)$ is an equivalence,
we get a faithful functor $H:S(\ck)\to\ck$ which is essentially surjective on objects, i.e., any $K\in\ck$ is isomorphic to some $H(F,\tilde{a})$.

\begin{lemma}\label{le2.1}
The category $S(\ck)$ is finitely accessible and $H:S(\ck)\to\ck$ preserves directed colimits.
\end{lemma}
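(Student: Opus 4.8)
The plan is to prove that the forgetful functor $G\colon S(\ck)\to\Set^{\ca^{\op}}$ creates directed colimits, and then to exhibit a set of finitely presentable objects of $S(\ck)$ that is dense for directed colimits; the assertion about $H$ falls out along the way.

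\smallskip\noindent\emph{Directed colimits.} Let $(F_d,a_{F_d})$ be a directed diagram in $S(\ck)$ and let $F=\colim_d F_d$ in $\Set^{\ca^{\op}}$, with coprojections $c_d\colon F_d\to F$. Each domain $X$ of a cone $a\in T$ is finitely presentable in $\Set^{\ca^{\op}}$, so every $f\colon X\to F$ factors as $f=c_d\cdot f'$ for some $d$, and we set $a_F(f)=c_d\cdot a_{F_d}(f')$. If $f=c_d f'=c_e f''$, then by directedness and finite presentability of $X$ there is $w\ge d,e$ at which the two composites $X\to F_w$ agree; since every connecting morphism of the diagram respects the chosen Skolem operations, the two candidates for $a_F(f)$ agree after composing with $F_w\to F$. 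Hence $a_F$ is well defined, satisfies $a_F(f)\cdot a_i=f$, and one checks routinely that $(F,a_F)$ together with the $c_d$ is the colimit of the diagram in $S(\ck)$. Thus $S(\ck)$ has directed colimits and $G$ creates them. A finitary-cone-injectivity class is closed under directed colimits in its ambient presheaf category (a morphism out of a finitely presentable object into such a colimit factors through a stage, at which a cone-factorization is already available); consequently the corestriction $S(\ck)\to\Inj(T)$ also preserves directed colimits, and composing with the equivalence $E\colon\ck\simeq\Inj(T)$ shows that $H$ preserves directed colimits.

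\smallskip\noindent\emph{Finite accessibility.} Given $(F,a_F)$ in $S(\ck)$ and a morphism $t\colon X_0\to F$ out of a finitely presentable object $X_0$ of $\Set^{\ca^{\op}}$, let $\langle t\rangle\hookrightarrow F$ be the least subobject through which $t$ factors and which is closed under every operation $a_F$, i.e. such that $a_F(f)$ factors through $\langle t\rangle$ whenever $f\colon X\to\langle t\rangle$. The restricted operations make $\langle t\rangle$ an object of $S(\ck)$ and the inclusion a morphism of $S(\ck)$, and, up to isomorphism, these ``finitely generated Skolem substructures'' form a set. Because finitely presentable objects are dense in $\Set^{\ca^{\op}}$ and $G$ creates directed colimits, the canonical diagram of the substructures $\langle t\rangle$ over a fixed $(F,a_F)$ is directed with colimit $(F,a_F)$ in $S(\ck)$; so it remains to prove that each $(\langle t\rangle,a_F|_{\langle t\rangle})$ is finitely presentable \emph{in} $S(\ck)$. (Equivalently, one may note that the Skolem datum for a cone $a$ at an object $F$ is a section of the $F$-component of the precomposition map $\coprod_{i\in I}\Set^{\ca^{\op}}(EA_i,-)\to\Set^{\ca^{\op}}(X,-)$, and that naturality of these sections is exactly compatibility with morphisms of $S(\ck)$; thus $S(\ck)$ is an equifier sitting inside an inserter formed from finitary endo-data on the locally finitely presentable category $\Set^{\ca^{\op}}$, and one can instead appeal to the closure of finitely accessible categories under such limit constructions — again with the caveat about ranks below.)

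\smallskip\noindent\emph{Main obstacle.} The crux is the claim that $\langle t\rangle$ is finitely presentable in $S(\ck)$: this is genuinely delicate, because $\langle t\rangle$ need \emph{not} be finitely presentable in $\Set^{\ca^{\op}}$, its closure under the operations being built up in up to $\omega$ stages. Concretely, given a directed colimit $(G,a_G)=\colim_d(G_d,a_{G_d})$ and $\psi\colon\langle t\rangle\to(G,a_G)$, the composite $X_0\to\langle t\rangle\xrightarrow{\psi}G$ factors through some $G_{d_0}$ since $X_0$ is finitely presentable, and commutation of $\psi$ with the operations lets one propagate the factorization; but a naive propagation yields an increasing sequence of stages $d_0\le d_1\le\cdots$, one per round of the closure, and the point is to collapse this to a single stage. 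This is precisely where one must use that the presentation supplied by Theorem~4.17 of \cite{AR} is \emph{finitary} — both the sources $X$ and the targets $EA_i$ of the cones are finitely presentable in $\Set^{\ca^{\op}}$, so each operation has finite character — and where the disjunctive nature of the cones (the unconstrained choice of the index $i$ in $a_F(f)\colon EA_i\to F$) must be controlled uniformly along the colimit. In model-theoretic terms this is the familiar fact that a substructure generated by a finite subset over a first-order language is finitely presentable as a structure, even when it is infinite.
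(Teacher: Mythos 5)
Your treatment of directed colimits is correct and coincides with the paper's: you compute the colimit of the underlying presheaves, use finite presentability of the cone domains $X$ to define the operations on the colimit and to check that the definition is independent of the chosen stage, and conclude that $G$, hence $H$, preserves directed colimits. The problem is the second half. You reduce finite accessibility to the claim that each finitely generated Skolem substructure $\langle t\rangle$ is finitely presentable in $S(\ck)$, and then, under the heading ``Main obstacle'', you explain why this is delicate without proving it: you note that the naive factorization of a morphism $\langle t\rangle\to\colim_d(G_d,a_{G_d})$ produces an unbounded increasing sequence of stages $d_0\le d_1\le\cdots$, one per round of the closure, and that ``the point is to collapse this to a single stage'' --- but you never perform the collapse. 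That collapse \emph{is} the content of the finite-accessibility assertion, so as written the argument is incomplete precisely where it matters. Moreover, the ``familiar fact'' you invoke at the end is not one: in a category of structures and \emph{all} homomorphisms, finitely generated objects are in general not finitely presentable (finitely generated, non-finitely-presented groups give counterexamples), and the morphisms of $S(\ck)$ are all operation-preserving natural transformations, not embeddings. So something specific to the Skolem situation must be identified and used, and it has not been.

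It is worth noting that the paper sidesteps your obstacle by choosing different generators: it declares the finitely presentable objects of $S(\ck)$ to be the pairs $(F,a_F)$ whose underlying presheaf $F$ is already finitely presentable in $\Set^{\ca^{\op}}$, and writes an arbitrary object as a directed colimit of such pairs by completing a presentation $F=\colim_d F_d$ to objects $(F_d,a_{F_d})$, again via finite presentability of the $X$'s. Your $\langle t\rangle$, by contrast, is built in $\omega$ stages and its underlying presheaf is typically \emph{not} finitely presentable, which is exactly why your propagation argument does not terminate. (The paper's own justification of its two claims is admittedly terse, but it at least works with generators for which the relevant hom-functor computation reduces to finite presentability in $\Set^{\ca^{\op}}$.) To salvage your route you would have to prove directly that $S(\ck)(\langle t\rangle,-)$ preserves directed colimits --- the uniqueness half does go through, since a morphism out of $\langle t\rangle$ is determined by its restriction to the generating stage and any identification achieved there propagates through all closure stages at the \emph{same} index $d$; it is the existence half that still needs a genuine idea, and supplying it (or switching to the paper's generators) is what is missing.
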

\begin{proof}
Let $D:\cd\to S(\ck)$ be a directed diagram and consider the colimit $\delta: GD\to F$ in $\Set^{\ca^{\op}}$. Then $\colim D =(F,a_F)$ where 
$a_F(f)=\delta_d a_{Dd}(g)$ where $f=\delta_d g$. Since $X$ is finitely presentable, the description is correct. Thus $S(\ck)$ has directed
colimits and $G$ preserves them. Hence $H$ preserves them too.

If $F$ is finitely presentable in $\Set^{\ca^{\op}}$ then any $(F,a_F)$ is finitely presentable in $S(\ck)$. In order to show that any $(F,a_F)$
is a directed colimit of finitely presentable objects in $S(\ck)$ it suffices to express $F$ as a directed colimit of finitely presentable objects
$F_d$ in $\Set^{\ca^{\op}}$ and complete them to $(F_d,a_{F_d})$ using finite presentability of $X$ again. Then $(F,a_F)$ is a directed colimit of
$(F_d,a_{F_d})$. Thus $S(\ck)$ is finitely accessible.
\end{proof}  

In fact, we have shown that
$$
S(\ck)=S(\Ind\ca)=\Ind S(\ca)
$$
$S(\ck)$ will be called a \textit{Skolem cover} of $\ck$ because it is a skolemization of the $L_{\infty,\omega}$-theory corresponding to $T$.

Let $U:\Set^{\ca^{\op}}\to\Set$ assign to $F$ the set $\coprod_{A\in\ca}FA$. The functor $U$ is faithful and preserves directed colimits. Thus
$(\ck,UE)$ and $(S(\ck),UG)$ are concrete finitely accessible categories with concrete directed colimits and $H:S(\ck)\to \ck$ is a concrete
functor. 

\begin{lemma}\label{le2.2}
Let $(F,a_F)\in S(\ck)$ and $Z\subseteq UG(F,a_F)$. Then there is the smallest subobject $(F_Z,a_{F_Z})$ of $(F,a_F)$ such that $Z\subseteq UGF_Z$.
\end{lemma}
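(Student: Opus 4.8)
The plan is to identify the subobjects of $(F,a_F)$ in $S(\ck)$ with those subfunctors of $F$ that are closed under the Skolem operations encoded by $a_F$, and then to observe that such closed subfunctors are closed under intersection in $\Set^{\ca^{\op}}$; the desired $(F_Z,a_{F_Z})$ is then simply the intersection of all of them whose total set of elements contains $Z$.

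First I would make the closure notion precise: call a subfunctor $F'\subseteq F$ \emph{$a_F$-closed} if for every cone $a=(a_i:X\to EA_i)_{i\in I}$ in $T$ and every natural transformation $f:X\to F'$ the morphism $a_F(\iota f):EA_i\to F$ (with $\iota:F'\hookrightarrow F$ the inclusion and $i$ the index picked by $a_F$) factors through $\iota$. For such an $F'$, defining $a_{F'}(f)$ to be this factorization — unique since $\iota$ is monic — makes $(F',a_{F'})$ an object of $S(\ck)$ (the cone equation $a_{F'}(f)a_i=f$ follows from $a_F(\iota f)a_i=\iota f$ and monicity of $\iota$) and makes $\iota:(F',a_{F'})\to(F,a_F)$ a morphism of $S(\ck)$, which is monic because $G$ is faithful. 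Conversely, replacing a subobject by the image subfunctor of its underlying natural transformation produces such an $F'$; since the statement refers to subobjects only through their images under $UG$, it is harmless to restrict attention to subfunctor inclusions from the outset. These verifications are routine and I would not spell them out.

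The heart of the matter is that the intersection $F^{*}=\bigcap_{j\in J}F'_j$ of any family of $a_F$-closed subfunctors of $F$ is again $a_F$-closed. Indeed, given $a\in T$ and $f:X\to F^{*}$, for each $j$ the corestriction of $f$ to $F'_j$ composed with $F'_j\hookrightarrow F$ equals $\iota^{*}f$, where $\iota^{*}:F^{*}\hookrightarrow F$; closedness of $F'_j$ then says the single morphism $a_F(\iota^{*}f)$ factors through $F'_j$, and since this holds for every $j$ it factors through $F^{*}$. (No finite presentability of $X$ is needed here; that was used only in Lemma~\ref{le2.1}.) Now $F$ itself is $a_F$-closed and $UG(F,a_F)\supseteq Z$, so the family of $a_F$-closed subfunctors $F'$ with $Z\subseteq UGF'=\coprod_{A\in\ca}F'A$ is nonempty; let $F_Z$ be its intersection. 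Then $F_Z$ is $a_F$-closed by the above, $Z\subseteq UGF_Z$ because every element of $Z$ lies in each member of the family and hence in the intersection, and $F_Z\subseteq F'$ for every such $F'$ by construction. Thus $(F_Z,a_{F_Z})$ is the required smallest subobject.

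I expect the only genuinely delicate point to be the passage, used in the second paragraph, from an arbitrary subobject to an $a_F$-closed subfunctor: checking that the $(\mathrm{epi},\mathrm{mono})$-factorization in $\Set^{\ca^{\op}}$ of the underlying transformation of a mono in $S(\ck)$ really yields an object of $S(\ck)$ that receives a morphism from the source. If one prefers to avoid this, it is standard for a concrete accessible category of this type to take ``subobject'' to mean ``subfunctor equipped with its induced (closed) Skolem structure'' — which is precisely the notion of substructure relevant to the intended applications — and then the argument above is complete as it stands.
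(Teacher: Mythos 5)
Your argument is correct, but it reaches the closure from the opposite direction to the paper. The paper constructs $(F_Z,a_{F_Z})$ \emph{from below}: it starts with the smallest subfunctor $F_0$ of $F$ containing $Z$, then repeatedly glues in the chosen witnesses $EA_i$ along all spans $(f,a_i)$ with $f:X\to F_n$, and takes $F_Z=\colim F_n$ of the resulting $\omega$-chain; finite presentability of the cone domains $X$ is what guarantees that every $f:X\to F_Z$ factors through some $F_n$, so the colimit is already closed under the Skolem choices $a_F$. You instead construct it \emph{from above}, as the intersection of all $a_F$-closed subfunctors containing $Z$; your key observation, that closedness is stated in terms of the single morphism $a_F(\iota f)$ chosen by the ambient structure and is therefore stable under arbitrary intersections, is exactly the point of skolemization, and your argument is complete (and, as you note, needs no finite presentability of $X$). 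What the paper's iterative construction buys, and your impredicative one does not, is explicit size control: $|UF_Z|$ is bounded by a cardinal depending only on $|Z|$, $\ca$ and $T$, which is what Remark \ref{re2.3} asserts and what the cardinal $\mu$ in the proof of Theorem \ref{th3.1} relies on; with the intersection definition you would have to supply this bound separately (e.g.\ by observing that your $F_Z$ coincides with the iteratively built closure). One caveat on your aside about arbitrary subobjects: the image-subfunctor reduction is not routine, since a mono in $S(\ck)$ need not have monic underlying transformation and a map $f:X\to\operatorname{Im}(Gm)$ need not lift along the epi part ($X$ is finitely presentable but not projective); however, the paper itself tacitly reads ``subobject'' as ``closed subfunctor with the induced Skolem structure,'' which is the notion used in Theorem \ref{th3.1}, so your fallback reading is the right one and your proof stands.
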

\begin{proof}
Let $F_0$ be the smallest subfunctor of $F$ such that $Z\subseteq UF_0$; let $\sigma:F_0\to F$ denote the inclusion. Consider a cone $a:X\to EA_i$ 
in $T$ and a morphism $f:X\to F_0$. Then the composition $\sigma f$ factorizes through some $a_i$. Let $F_1$ be a colimit in $\Set^{\ca^{\op}}$
of the diagram 
$$
\xymatrix{
F_0 \\
&&\\
X\ar[uu]^f  \ar@{.}[ur] \ar [rr]_{a_i} && EA_i
}
$$
consisting of all spans $(f,a_i)$ above. We iterate this construction by replacing $F_0$ with $F_1$, etc. In this way, we get the chain
$F_0\to F_1\to\dots F_n\to\dots$. Then $F_Z=\colim F_n$ carries the desired smallest subobject of $(F,a_F)$.
\end{proof}

This is the virtue of the skolemization and reflects the fact that the skolemized theory is universal. We skolemized cone-injectivity while
algebraic factorization systems (see \cite{GT}) skolemize injectivity. J. Bourke \cite{B} came to the same point from a different motivation.

\begin{rem}\label{re2.3}
{
\em
For any $Z$, there is only a set of non-isomorphic $(F_Z,a_{F_Z})$, $F:\ca^{\op}\to\Set$.
}
\end{rem}

\section{Minimal finitely accessible categories}
\begin{theo}\label{th3.1}
For any large finitely accessible category $\ck$ there is a faithful functor $\Lin\to\ck$ preserving directed colimits.
\end{theo}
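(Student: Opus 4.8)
The plan is to transfer the classical Ehrenfeucht--Mostowski construction to the Skolem cover $S(\ck)$ built in Section~2. Since $H\colon S(\ck)\to\ck$ is faithful and preserves directed colimits (Lemma~\ref{le2.1}), it suffices to produce a faithful functor $\Phi\colon\Lin\to S(\ck)$ preserving directed colimits, and then take $H\Phi$. Throughout I would work with the concrete structure $UG\colon S(\ck)\to\Set$ and its concrete directed colimits. For a subset $Z$ of the underlying set of an object $P\in S(\ck)$ write $\langle Z\rangle\hookrightarrow P$ for the smallest subobject containing $Z$ (Lemma~\ref{le2.2}); two features of this hull will be used: its construction makes $\langle-\rangle$ send directed unions of subsets to directed colimits, and, because the skolemized theory is universal (the comment following Lemma~\ref{le2.2}), a morphism of $S(\ck)$ out of $\langle Z\rangle$ is uniquely determined by its values on $Z$. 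By Remark~\ref{re2.3} there is an infinite cardinal $\mu$ such that for each $n<\omega$ there are at most $\mu$ isomorphism classes of pairs $(\langle\bar z\rangle,\bar z)$ with $\bar z=(z_1,\dots,z_n)$ a tuple of distinct elements of the underlying set of some object of $S(\ck)$.

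Next I would extract a coherent system of indiscernibles. Because $\ck$ is large and $H$ is essentially surjective, $S(\ck)$ is large; and since there is only a set of isomorphism classes of objects $(F,a_F)$ of $S(\ck)$ whose underlying set has a fixed bounded cardinality (only a set of such presheaves $F\colon\ca^{\op}\to\Set$, and for each only a set of admissible $a_F$), $S(\ck)$ must contain objects with arbitrarily large underlying sets. Pick $M\in S(\ck)$ with $|UGM|\ge\beth_{(2^\mu)^+}$, fix $X\subseteq UGM$ of that cardinality with a well-ordering, and colour each finite increasing tuple $\bar x$ from $X$ by the isomorphism class of $(\langle\bar x\rangle,\bar x)$; there are at most $\mu$ colours, so by the Erd\H{o}s--Rado theorem in the form used to obtain order-indiscernibles (as in Morley's work \cite{M}) there is a countably infinite $Y\subseteq X$ homogeneous for this colouring in every arity. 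Enumerating $Y$ increasingly as $(b_i)_{i<\omega}$ and setting $P_n=\langle b_1,\dots,b_n\rangle$ with its marked tuple, homogeneity gives for each order-embedding of finite ordinals $[m]\hookrightarrow[n]$ an isomorphism of $P_m$ onto the subobject of $P_n$ generated by the corresponding $b_i$'s, respecting marked elements; by the uniqueness noted above these isomorphisms, composed with the subobject inclusions into $P_n$, yield monomorphisms $P_m\to P_n$ that are functorial in the finite linear order. I will call this functorial family a blueprint.

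Finally I would build $\Phi$ from the blueprint. For $L\in\Lin$ its finite subsets form a directed poset, $L_0\mapsto P_{|L_0|}$ (via the unique order isomorphism $L_0\cong[\,|L_0|\,]$) is a diagram in $S(\ck)$ through the blueprint maps, and I set $\Phi(L)=\colim_{L_0\subseteq L}P_{|L_0|}$, computed concretely, with marked elements $(b^L_x)_{x\in L}$ the images of the generators. An order-embedding $f\colon L\to L'$ induces a morphism of the finite-subset diagrams, hence $\Phi(f)\colon\Phi(L)\to\Phi(L')$ with $\Phi(f)(b^L_x)=b^{L'}_{f(x)}$, and functoriality is clear. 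The $b^L_x$ are pairwise distinct (the $b_i$ are distinct in $M$, hence in each $P_n$, and the colimit injections $P_n\to\Phi(L)$, being colimit injections of a directed diagram with monic transitions, are monic), so if $f\ne g$ then $\Phi(f)$ and $\Phi(g)$ differ on some marked element, and since $UG$ is faithful $\Phi(f)\ne\Phi(g)$: $\Phi$ is faithful. If $L=\colim_dL_d$ is directed in $\Lin$, every finite subset of $L$ lies in some $L_d$, so $\colim_d\Phi(L_d)=\colim_{L_0\subseteq L}P_{|L_0|}=\Phi(L)$; thus $\Phi$, and hence $H\Phi\colon\Lin\to\ck$, preserves directed colimits. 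The delicate step is the second one: beyond the routine bookkeeping it requires knowing that $S(\ck)$ has objects of unbounded underlying cardinality and, above all, upgrading ``same isomorphism type of generated substructure'' to a genuinely coherent family of isomorphisms — this is precisely where the universality of the skolemized theory, i.e.\ the uniqueness of morphisms out of a hull, does the work.
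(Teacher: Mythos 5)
Your overall architecture --- reduce to the Skolem cover $S(\ck)$ via the faithful, directed-colimit-preserving $H$, use hulls and the uniqueness of morphisms out of a hull to turn arity-wise isomorphism of marked hulls into a coherent functor on finite linear orders, then extend along $\Lin=\Ind\FinLin$ --- agrees with the paper's indiscernibility branch, and your last two steps are essentially the paper's construction of $F_0$ and its $\Ind$-extension. The genuine gap is the step you yourself flag as delicate, but for a different reason than the one you address there: the extraction of a countably infinite set homogeneous in \emph{every} arity from a single object $M$ with $|UGM|\ge\beth_{(2^\mu)^+}$. Erd\H{o}s--Rado gives, for each fixed arity $n$, a homogeneous subset for an arity-$n$ colouring; the statement you invoke, namely $\kappa\to(\omega)^{<\omega}_\mu$ (one infinite set simultaneously homogeneous for colourings of all finite arities), is not provable in ZFC for any $\kappa$ obtained by iterating power sets --- the least $\kappa$ with $\kappa\to(\omega)^{<\omega}_2$ is the first Erd\H{o}s cardinal, a large-cardinal notion. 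In Morley's first-order setting this obstacle is overcome not by a bigger pigeonhole but by compactness: Erd\H{o}s--Rado is applied arity by arity (to tuples living in arbitrarily large models) to show that the Ehrenfeucht--Mostowski type is finitely consistent, and the indiscernibles are then realized in a \emph{new} model. Compactness is exactly what a general finitely accessible category lacks, which is why the theorem cannot be proved by finding indiscernibles inside one big object.

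The paper replaces this step by a dichotomy. For $(K,X)$ with $X$ a linearly ordered subset of $UK$ it defines $\rho_{(K,X)}$, measuring up to which arity all increasing tuples from $X$ generate isomorphic marked hulls. If some $(K,X)$ has $\rho_{(K,X)}=\omega$, the functor is built as in your final steps. If not, it shows $\ck$ is small, contradicting largeness: the order $(K_1,X_1)<(K_2,X_2)$ (strict drop in $\rho$ together with matching hull types) is well founded precisely because a strictly decreasing chain would have a \emph{directed colimit} $(K,X)$ with $\rho_{(K,X)}=\omega$ --- here directed colimits, i.e.\ the accessibility hypothesis, play the role of compactness, assembling arity-wise homogeneous data found in \emph{different} objects into a single object; the resulting ordinal rank, combined with arity-by-arity Erd\H{o}s--Rado, gives the bound $|X|<\exp_{\omega(\alpha(K,X)^\ast+1)}(\mu)$ for all $(K,X)$, hence smallness. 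So your steps concerning $S(\ck)$, hulls, $\mu$, the blueprint and the $\Ind$-extension survive, but the homogeneous-set extraction must be replaced by this rank/colimit argument (or by machinery such as Boney's Erd\H{o}s--Rado classes); as written, no choice of a single large $M$ can make it work.
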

\begin{proof}
Following \ref{le2.2}, we can assume that $\ck$ is equipped with a faithful functor $U:\ck\to\Set$ preserving directed colimits and such that
for any subset $Z\subseteq UK$ there is the smallest subobject $K_Z$ of $K$ such that $Z\subseteq UK_Z$. Let $\cl$ be the category with objects
$(K,X)$ where $K\in\ck$ and $X\subseteq UK$ is linearly ordered. Morphisms $(K_1,X_1)\to (K_2,X_2)$ are morphisms $f:K_1\to K_2$ such that
$Uf$ induces the order preserving mapping $X_1\to X_2$. The category $\cl$ has directed colimits given as
$$
\colim (K_i,X_i)=(\colim K_i,\colim X_i)
$$
and any $(K,X)$ with $K$ finitely presentable in $\ck$ and $X$ finite is finitely presentable in $\cl$. Thus $\cl$ is finitely accessible
and the forgetful functor $\cl\to\ck$ preserves directed colimits.

For a $\cl$-object $(K,X)$, let $\rho_{(K,X)}$ be the greatest ordinal $0<\rho_{(K,X)}\leq\omega,|X|$ such that for any $n<\rho_{(K,X)}$
and any $a_1< a_2<\dots <a_n$ and $b_1< b_2<\dots < b_n$ in $X$ there is an isomorphism $s:K_{\{a_1,\dots,a_n\}}\to K_{\{b_1,\dots,b_n\}}$ such
that $Us(a_i)=b_i$ for $i=1,\dots,n$. Assume that there is $(K,X)\in\cl$ with $\rho_{(K,X)}=\omega$. Then $X$ is infinite and for any $n<\omega$
there is a chain $a_{n1}< a_{n2}<\dots < a_{nn}$ in $X$. We will construct a functor $F:\Lin\to\ck$ as follows. Finitely presentable objects in $\Lin$
are finite chains $C_n$ with elements $1< 2<\dots <n$. Put $F_0(C_n)=K_{a_{n1},\dots,a_{nn}}$. Given an injective order preserving mapping $h:C_m\to C_n$, let
$Fh$ be the composition
$$
K_{a_{m1},\dots,a_{mm}}\to K_{a_{nh(1)},\dots,a_{nh(m)}}\to K_{a_{n1},\dots,a_{nn}}
$$
where the first morphism is the isomorphism $s$ above and the second morphism is the inclusion. Given $h_1:C_k\to C_m$ and $h_2:C_m\to C_n$ then
it is easy to see that $F_0(h_2h_1)=F_0(h_2)F_0(h_1)$.  In fact, we always get the isomorphism 
$$
K_{a_{k1},\dots,a_{kk}}\to K_{a_{nh_2h_1(1)},\dots a_{nh_2h_1(k)}}
$$
followed by the inclusion $K_{a_{nh_2h_1(1)},\dots a_{nh_2h_1(k)}}\to K_{a_{n1},\dots,a_{nn}}$. Thus we get the functor $F_0:\FinLin\to\ck$
defined on finite linear orderings. Since $\Lin=\Ind\FinLin$, $F_0$ extends to a functor $F:\Lin\to\ck$ preserving directed colimits.
Since $F_0$ is faithful, $F$ is faithful too.

Assume that $\rho_{(K,X)}<\omega$ for any $(K,X)\in\cl$. We put $(K_1,X_1)<(K_2,X_2)$ provided that $\rho_{(K_2,X_2)}<\rho_{(K_1,X_1)}$ and
$(K_1)_{\{a_1,\dots,a_{\rho_{(K_2,X_2)}}\}}\cong (K_2)_{\{b_1,\dots b_{\rho_{(K_2,X_2)}}\}}$ for any $a_1< \dots < a_{\rho_{(K_2,X_2)}}$ in $X_1$
and any $b_1,\dots <b_{\rho_{(K_2,X_2)}}$ in $X_2$. Then $<$ partially orders objects of $\cl$ and this order is well-founded in the sense
that there is no decreasing chain
$$
\dots <(K_n,X_n) < (K_{n-1},X_{n-1}) <\dots < (K_1,X_1).
$$
Such chain would yield a diagram
$$
(K_1)_{\{a_{11}\}}\to (K_2)_{\{a_{21},a_{22}\}}\to (K_n)_{\{a_{n1},\dots,a_{nn}\}}
$$
whose colimit $(K,X)$ in $\cl$ has $\rho_{(K,X)}=\omega$. Thus we can assign an ordinal $\alpha(K,X)$ to each $(K,X)\in\cl$ in such a way
that 
$$
\alpha(K,X)=\sup_{(K',X')<(K,X)}\alpha(K',X')+1.
$$

Following \ref{re2.3}, there is an infinite cardinal $\mu$ greater or equal to the number of non-isomorphic objects $K_X$ for $X$ finite
and $K$ arbitrary. For $(K,X)\in\cl$, choose $a_1<\dots <a_{\rho_{(K,X)-1}}$ in $X$ and put
$$
(K,X)^\ast= (K_{\{a_1,\dots,a_{\rho_{(K,X)-1}}\}},X\cap UK_{\{a_1,\dots,a_{\rho_{(K,X)-1}}\}}).
$$
We will prove that 
$$
|X|<\exp_{\omega(\alpha(K,X)^\ast+1})(\mu) 
$$
for any $(K,X)\in\cl$. Recall that $\exp_0(\mu)=\mu$, $\exp_{\xi+1}(\mu)=2^{\exp_\xi(\mu)}$ and $\exp_\eta(\mu)=sup_{\xi<\eta}\exp_\xi(\mu)$.
Since $(K,UK)\in\cl$ for any $K$ in $\ck$, this inequality implies that $\ck$ is small.

The proof will use the recursion on $\alpha(K,X)^\ast$.
Let $\alpha(K,X)^\ast=0$ and assume that $|X|\geq\exp_\omega(\mu)$.  The set $X^n$ is decomposed into $\leq\mu$ parts following isomorphisms types
of $K_{\{a_1,\dots,a_n\}}$. Following the Erd\"os-Rado partition theorem (see \cite{J}, Exercise 29.1), there is $X_0\subseteq X$ such that $X_0>\mu$
and $K_{\{a_1,\dots,a_n\}}\cong K_{\{b_1,\dots,b_n\}}$ for any $a_1<\dots<a_n$ and $b_1<\dots<b_n$ in $X_0$. Thus 
$$\label{claim}
(K,X_0)^\ast=(K_{\{a_1,\dots,a_n\}},X_0\cap UK_{\{a_1,\dots,a_n\}})<(K,X)^\ast,
$$
which is impossible because $\alpha(K,X)^\ast=0$.

Assume that the claim holds for any $(K,X)\in\cl$ with $\alpha(K,X)^\ast<\beta$ and consider $(L,Y)\in\cl$ with $\alpha(L,Y)^\ast=\beta$. Assume that
$|Y|\geq\exp_{\omega(\alpha(L,Y)^\ast+1)}(\mu)$ and let $n=\rho_{(L,Y)}$. We have
$$
|Y|\geq\exp_{\omega(\beta+1)}(\mu)>\exp_{\omega\beta+n-1}(\mu)=\exp_{n-1}(\exp_{\omega\beta}(\mu).
$$
Following the Erd\"os-Rado partition theorem, there is $Y_0\subseteq Y$ such that $|Y_0|>\exp_{\omega\beta}(\mu)$ and 
$L_{\{b_1,\dots,b_n\}}\cong L_{\{c_1,\dots,c_n\}}$ for each $b_1<\dots <b_n$ and $c_1<\dots <c_n$ in $Y_0$. Then $\rho_{(L,Y_0)}> n$ and
$(L,Y_0)<(L,Y)$. Thus $(L,Y_0)^\ast < (L,Y)^\ast$. Hence $\alpha(L,Y_0)^\ast<\beta$ and thus
$$
|Y_0|<\exp_{\omega(\alpha(L,Y_0)^\ast+1}(\mu)\leq\exp_{\omega\beta}(\mu),
$$
which is a contradiction.
\end{proof}

\section{Towards minimal $\lambda$-accessible categories}

\begin{exam}\label{re4,2}
{
\em
The category $\cw$ of well-ordered sets is $\aleph_1$-accessible and any its object $K$ is iso-rigid in the sense that the only isomorphism
$K\to K$ is the identity. Thus there is no faithful functor $\Lin\to\cw$ and a prospective minimal $\aleph_1$-accessible category is iso-rigid.
}
\end{exam}

\begin{exam}\label{hpt}
{
\em
There is an $\aleph_1$-accessible category $\cl$ having all objects $K$ rigid in the sense that the only morphism $K\to K$ is the identity. Thus there is no faithful functor $\cw\to\cl$.

The construction of $\cl$ is motivated by \cite{HPT}, II.3. Let $\ck$ be the category of structures $(A,<,R,S,\sup,s)$ where $<$ is a well-ordering, 
$R$ is a unary relation, $S$ is an $\omega$-ary relation, $\sup$ is the countable join and $s$ is the unary operation of taking the successor. 
Let $T$ be the following set of axioms:
\begin{enumerate}
\item $(\forall x_0,x_1,y_1,\dots,x_n,y_n,\dots)(S(x_0,x_1,\dots,x_n,\dots)\wedge S(x_0,y_1,\dots,y_n,\dots)\to \newline\bigwedge_{0<n}x_n=y_n)$

\item $(\forall x_0,x_1,\dots,x_n,\dots)(S(x_0,x_1\dots,x_n\dots)\to(\bigwedge_{0<n}x_n<x_{n+1})\wedge x_0=\sup x_n)$

\item $(\forall x)(\exists(y_1,\dots,y_n,\dots)(\bigwedge_{0<n}(y_n<y_{n+1})\wedge x=\sup y_n)\newline \to(\exists x_1,\dots,x_n,\dots)S(x,x_1,\dots,x_n,\dots)$
       
\item $(\forall x)(R(x)\leftrightarrow \neg(\exists y)(x=s(y))\wedge \neg(\exists x_1,\dots,x_n,\dots)S(x,x_1,\dots,x_n,\dots)$
\end{enumerate}
Let $A_2$ be the set of isolated elements of $A$, $A_0$ be the set of all limit elements of $a\in A$ such that $S(a,a_1,\dots,a_n,\dots)$ for some
$a_1,\dots,a_n,\dots\in A$ and $A_1=A\setminus (A_0\cup A_2)$. All the sets $A_2$, $A_0$ and $A_1$ are preserved by homomorphisms $f:A\to B$
(due to $s$, $S$ and $R$ resp.).  

This category clearly has $\aleph_1$-directed colimits. Objects $A$ of $\cl$ generated by $0$ are ordinals $\omega_1$ with a choice of $S$ for every $a\in A_0$. Thus there is $\aleph_0^{\aleph_1}=2^{\aleph_1}$ such objects. These objects are
$\aleph_1$-presentable and the same is true for objects $\omega_1\cdot\alpha$ where $\alpha<\omega_1$. Clearly, every object
of $\cl$ is an $\aleph_1$-directed colimit of these objects $\omega_1\cdot\alpha$, $\alpha<\omega_1$. Thus $\cl$ is 
$\aleph_1$-accessible.

Assume that there exists a morphism $f:A\to A$ in $\cl$ which is not the identity. Let $a$ be the least element in $A$ such that $f(a)\neq a$.
Since A is a well-ordered set and $f$ is injective, $a<f(a)$. Hence
$$
a<f(a)<f^2(a)<\dots <f^n(a)<\dots
$$
Let $b=\sup f^n(a)$. There are $b_1<b_2<\dots <b_n<\dots$ such that $S(b,b_1,\dots,b_n,\dots)$. Since $S(f(b),f(b_1),\dots,f(b_n),\dots)$,
we have $f(b)=\sup f(b_n)$. For each $n$ there is $k$ such that $b_n<f^k(a)$. Hence $f(b_n)<f^{k+1}(a)$ and thus $f(b)=b$. Therefore $f(b_n)=b_n$
for each $n$. Since $a<b_m$ for some $m$, $f^n(a)<b_m$ for each $n$. Hence $b\leq b_m$, which is a contradiction.
}
\end{exam}

\begin{rem}
{
\em
(1) Let $\cl_1$ be a full subcategory of $\cl$ where we choose $S$ for every $a\in A_0$ in every object generated by $0$.  This category does not depend of the choices of $S$ and is also $\aleph_1$-accessible. In fact, it is $\Ind_{\aleph_1}(\cc_1)$ where
$\cc_1$ is the category of ordinals $\omega_1\cdot\alpha$, $\alpha<\omega_1$ with non-identity morphisms 
$$
\omega_1\cdot f:\omega\cdot\alpha\to\omega_1\cdot\beta
$$
where $f:\alpha\to\beta$ is an order preserving injective mapping with $\alpha<\beta$. The category $\cc_1$ is, in fact,
the category of ordinals $\alpha<\omega_1$ where non-identity morphisms are order preserving injective mappings $\alpha\to\beta$
for $\alpha<\beta<\omega_1$.

(2) The category $\FinLin$ is the category $\cc_0$ of ordinals $\alpha<\omega$ where non-identity morphisms are order preserving
injective mapping $\alpha\to\beta$ for $\alpha<\beta<\omega$. Observe that $\FinLin$ is rigid, i.e., the only morphisms
$\alpha\to\alpha$ are the identities. Hence $\cl_1=\Ind_{\omega_1} \cc_1$ is $\aleph_1$-modification of a minimal $\aleph_0$-accessible category $\Lin$.

(3) Let $\cc_\gamma$ be the category of ordinals $\alpha<\omega_\gamma$ where non-identity morphisms are order preserving
injective mapping $\alpha\to\beta$ for $\alpha<\beta<\omega_\gamma$. Then $\cl_\gamma = \Ind_{\aleph_\gamma}\cc_\gamma$ is
an $\aleph_\gamma$-accessible category.
}
\end{rem}

\begin{pb}
{
\em
Is $\cl_1$ a minimal $\aleph_1$-accessible category? This means that for every large $\aleph_1$-accessible category $\ck$ there is a faithful functor $\cl_1\to\ck$ preserving $\aleph_1$-directed colimits.

Similarly, is $\cl_\gamma$ a minimal $\aleph_\gamma$-accessible category
for $0<\gamma$?
}
\end{pb}

\end{document}